\newcommand{\ddt}{\frac{\partial}{\partial t}}
\newcommand{\ddbar}{\sqrt{-1} \partial \overline{\partial}}
\newcommand{\Ric}{\mathrm{Ric}}
\newcommand{\ov}[1]{\overline{#1}}
\newcommand{\tr}[2]{\mathrm{tr}_{#1}{#2}}
\newcommand{\omegahatt}{\hat{\omega}_t}
\newcommand{\omegahatinf}{\hat{\omega}_\infty}
\newcommand{\tphi}{\tilde{\varphi}}
\newcommand{\covaro}{( \nabla_0 )}
\newcommand{\go}{( g_0)}
\newcommand{\To}{(T_0)}
\title{The Chern-Ricci flow on smooth minimal models of general type}
\author{Matthew Gill}
\thanks{Supported by NSF RTG grant DMS-0838703.}
\begin{document}
\newcounter{remark}
\newcounter{theor}
\setcounter{remark}{0}
\setcounter{theor}{1}
\newtheorem{claim}{Claim}
\newtheorem{theorem}{Theorem}[section]
\newtheorem{lemma}[theorem]{Lemma}
\newtheorem{corollary}[theorem]{Corollary}
\newtheorem{proposition}[theorem]{Proposition}
\newtheorem{question}{question}[section]
\newtheorem{defn}{Definition}[theor]

\begin{abstract}
We show that on a smooth Hermitian minimal model of general type the Chern-Ricci flow converges to a closed positive current on $M$. Moreover, the flow converges smoothly to a K\"ahler-Einstein metric on compact sets away from the null locus of $K_M$. This generalizes work of Tsuji and Tian-Zhang to Hermitian manifolds, providing further evidence that the Chern-Ricci flow is a natural generalization of the K\"ahler-Ricci flow.
\end{abstract}

\maketitle

\section{Introduction}

Let $(M,g_0)$ be a complex manifold of dimension $n$ with a Hermitian metric $g_0$. We define a real $(1,1)$ form $\omega_0 = \sqrt{-1} (g_0)_{i\bar{j}} dz^i \wedge dz^{\overline{j}}$ on $M$. The normalized Chern-Ricci flow is

\begin{equation}\label{ncrf}
\ddt \omega = - \Ric(\omega) - \omega, \ \ \ \omega|_{t=0} = \omega_0
\end{equation} 
where $\Ric(\omega) := -\ddbar \log \det g$ is the Chern-Ricci form of $\omega$. When the initial metric $\omega_0$ is K\"ahler ($d\omega_0 = 0$), then \eqref{ncrf} is the normalized K\"ahler-Ricci flow. Another flow of Hermitian metrics, the pluriclosed flow, has been considered by Streets-Tian \cite{StT1, StT2, StT3} (see also Liu-Yang \cite{LY}). The unnormalized Chern-Ricci flow was introduced in \cite{G}. The overall hope is that the Chern-Ricci flow will be useful in the classification of complex surfaces much like the Ricci flow in real dimension three \cite{H1, H2, H3, P1, P2, P3}.

Recently, the Chern-Ricci flow has been shown to have many properties in common with the K\"ahler-Ricci flow, especially in the case of complex surfaces. When the first Bott-Chern class is zero, the flow was shown to exist for all time and converge smoothly to a Chern-Ricci flat metric \cite{G} using estimates for the elliptic Monge-Amp\`ere equation \cite{Ch, GL, TW1}. This generalized the K\"ahler case considered by Cao \cite{Cao}, whose proof made use of the estimates of Yau \cite{Yau}. The work of Tosatti-Weinkove \cite{TW2, TW3} contains several explicit examples of the Chern-Ricci flow and many results generalizing those of the K\"ahler-Ricci flow. In particular, that the flow exists on some maximal time interval that depends on the Bott-Chern class of the initial metric. If the first Chern class of the manifold is negative, then the flow starting with any Hermitian metric converges smoothly to a K\"ahler-Einstein metric. On complex surfaces with an initial Gauduchon metric, the flow exists either for all time or until the volume or a curve of negative self-intersection tends to zero. Starting with an elliptic bundle over a Riemann surface of genus greater than one, the Chern-Ricci flow converges exponentially fast to a K\"ahler-Einstein metric on the base \cite{TWY}. Local Calabi and curvature estimates are also known for the flow \cite{ShW2}. Analogous results for the K\"ahler-Ricci flow can be found in \cite{Cao, FIK, FZ, G2, ShW1, ST1, ST2, SW1, SWnotes,TZ}.

If the first Bott-Chern class of the canonical bundle $K_M$ is nef, we say that $M$ is a \emph{minimal model}. When $M$ is a minimal model, the normalized Chern-Ricci flow has a smooth solution for all time \cite{TWY}. Additionally, if $K_M$ is a big line bundle, we say that $M$ is of \emph{general type}. The null locus of $K_M$, $\mathrm{Null}(K_M)$, is the union over all positive dimensional irreducible analytic subvarieties $V \subset M$ of dimension $k$ where
\begin{equation*}
\int_V\left( c_1(K_M)\right)^k = 0.
\end{equation*}
We assume that $M$ is a Hermitian smooth minimal model of general type and prove the following theorem:

\begin{theorem}\label{maintheorem}
Let $(M,\omega_0)$ be a smooth Hermitian minimal model of general type of dimension $n$ with Hermitian metric $\omega_0$. Then the normalized Chern-Ricci flow \eqref{ncrf} has a smooth solution for all time and there exists a closed positive current $\omega_{KE}$ on $M$ such that $\omega(t)$ converges $\omega_{KE}$ as currents as $t \to \infty$. 

Moreover, letting $E = \mathrm{Null}(K_M)$, $\omega(t)$ converges in $C^\infty_{loc}(M\setminus E)$ to a K\"ahler-Einstein metric $\omega_{KE}$ away from $E$ satisfying 
\begin{equation*}
\Ric(\omega_{KE}) = - \omega_{KE}.
\end{equation*}
The null locus of $K_M$ is the smallest possible choice for $E$. 
\end{theorem}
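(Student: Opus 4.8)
The plan is to reduce the flow to a parabolic complex Monge–Ampère equation and to import the singularity-free estimates from pluripotential theory. First I would fix a smooth closed $(1,1)$-form $\eta$ representing the first Bott–Chern class of $K_M$ (such $\eta$ exists since $K_M$ is nef, though $\eta$ need not be K\"ahler), and write the solution of \eqref{ncrf} as $\omega(t) = \hat{\omega}_t + \ddbar \varphi$, where $\hat{\omega}_t = \eta + e^{-t}(\omega_0 - \eta)$ is a reference path of real $(1,1)$-forms interpolating between $\omega_0$ at $t=0$ and $\eta$ as $t\to\infty$; here one uses that for $M$ a minimal model the normalized flow exists for all time \cite{TWY}. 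Under this ansatz \eqref{ncrf} becomes a scalar parabolic equation of the form $\ddt \varphi = \log \frac{(\hat{\omega}_t + \ddbar\varphi)^n}{\Omega} - \varphi$ for a suitable smooth volume form $\Omega$ (with $\ddbar\log\Omega$ essentially $\eta - \Ric(\omega_0)$ up to lower order), and $\omega(t)>0$ is equivalent to $\hat{\omega}_t + \ddbar\varphi$ being a genuine Hermitian metric. Since $\omega_0$ is only Hermitian, $\hat{\omega}_t$ need not be closed, so the equation has the extra non-K\"ahler torsion terms; but these are handled exactly as in \cite{G, TW2, TW3, TWY}, and do not affect the global structure of the argument.

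The key analytic input is the limiting elliptic equation. Because $K_M$ is big, there is a singular K\"ahler–Einstein current $\omega_{KE} = \eta + \ddbar\varphi_\infty$ with $\varphi_\infty \in PSH(M,\eta)\cap L^\infty(M)$, smooth on $M \setminus E$ where $E = \mathrm{Null}(K_M)$, solving the degenerate complex Monge–Ampère equation $(\eta + \ddbar\varphi_\infty)^n = e^{\varphi_\infty}\Omega$ on $M \setminus E$ and $\Ric(\omega_{KE}) = -\omega_{KE}$ there; in the K\"ahler case this is the Eyssidieux–Guedj–Zeriahi / Boucksom–Eyssidieux–Guedj–Zeriahi theory combined with Collins–Tosatti, and the same construction goes through on Hermitian $M$ using the elliptic estimates of \cite{TW1} (since the equation is scalar, the Hermitian modifications are harmless). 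The steps are then: (1) a uniform $C^0$ bound $\|\varphi(t)\|_{L^\infty(M)} \le C$ for all $t$, obtained by comparing with $\varphi_\infty$ via the maximum principle together with a barrier built from a quasi-psh function with analytic singularities along $E$ (this uses bigness of $K_M$); (2) local higher-order estimates on $M\setminus E$: having the $C^0$ bound and a local lower bound $\omega(t)\ge c\,\omega_{KE} > 0$ on compact $K \Subset M\setminus E$, one runs the parabolic Schwarz lemma / local Calabi estimate and the local curvature estimates of \cite{ShW2}, bootstrapping to $C^\infty_{loc}(M\setminus E)$ bounds uniform in $t$; (3) convergence: from $\ddt\varphi = \log\frac{\omega(t)^n}{\Omega} - \varphi$ and a Harnack-type / monotonicity argument on $\dot\varphi$ one shows $\varphi(t) \to \varphi_\infty$, hence $\omega(t) \to \omega_{KE}$ smoothly on $M\setminus E$ and as positive currents on $M$ (the global current convergence follows from the uniform $C^0$ bound plus local smooth convergence, since $E$ has measure zero and the masses match).

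The main obstacle is step (2) — producing the local lower bound $\omega(t) \ge c\,\omega_{KE}$ on compact subsets of $M\setminus E$ that is uniform in $t$, which is what feeds the parabolic Schwarz lemma and prevents the metric from degenerating away from the null locus. In the K\"ahler setting this follows from a clever use of the evolution of $\log\mathrm{tr}_{\omega_{KE}}\omega$ together with the $C^0$ estimate and a term coming from $\ddbar$ of a cutoff-type potential with the right singularity along $E$; on a Hermitian manifold the Chern connection has torsion, so the Bochner-type computation for $\Delta \log\mathrm{tr}_{\omega_{KE}}\omega$ acquires torsion terms that are not manifestly controlled, and reconciling these (as in \cite{TW2, TW3, TWY, ShW2}) is the technical heart of the argument. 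The final sentence — that $E = \mathrm{Null}(K_M)$ is the \emph{smallest} possible exceptional set — I would prove by contradiction: if $\omega(t)$ converged smoothly to a K\"ahler metric on a neighborhood of some $V \subset \mathrm{Null}(K_M)$, then $\omega_{KE}$ would extend to a smooth K\"ahler metric there, forcing $\int_V c_1(K_M)^{\dim V} = \int_V \omega_{KE}^{\dim V} > 0$, contradicting $V \subset \mathrm{Null}(K_M)$; this uses that $\omega_{KE}$ represents $c_1(K_M)$ in Bott–Chern cohomology.
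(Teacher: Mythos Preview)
Your outline is broadly correct, but there is a genuine gap at exactly the point you flag as the ``main obstacle,'' and a circularity in how you set things up. You take the existence of a singular K\"ahler--Einstein current $\omega_{KE}=\eta+\ddbar\varphi_\infty$ with $\varphi_\infty\in L^\infty(M)$ as \emph{input}, citing EGZ/BEGZ together with \cite{TW1}. But EGZ/BEGZ is a K\"ahler theory, and \cite{TW1} treats only the nondegenerate Hermitian Monge--Amp\`ere equation; the degenerate elliptic $L^\infty$ estimate on a merely Hermitian (Moishezon) $M$ is not available off the shelf. In the paper the existence of $\omega_{KE}$ is a \emph{consequence} of the flow argument, not an assumption, and no uniform global lower bound on $\varphi$ is ever proved---only $\varphi\ge \varepsilon\psi - C_\varepsilon$, which blows up along $E$.

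More importantly, you do not actually produce the trace estimate. The paper does not work with $\tr{\omega_{KE}}{\omega}$ (which would require curvature control on a metric only defined on $M\setminus E$); it works globally with $\tr{\omega_0}{\omega}$. The mechanism is: take the K\"ahler current $T=\omegahatinf+\ddbar\psi\ge c_0\omega_0$ from Collins--Tosatti with $\psi$ smooth off $E=\mathrm{Null}(K_M)$ and $\psi\to -\infty$ on $E$, set $\tphi=\varphi-\psi$, and apply the maximum principle to
\[
Q=\log\tr{\omega_0}{\omega}-A\tphi+\frac{1}{\tphi+C_0},
\]
the last term being the Phong--Sturm device. The torsion terms in the evolution of $\log\tr{\omega_0}{\omega}$ are controlled exactly as in \cite{TW2}, and the $-A\tphi$ term contributes $+A\,\tr{\omega}{S_t}$ with $S_t=\omegahatt+\ddbar\psi\ge \tfrac{c_0}{2}\omega_0$ for large $t$, which absorbs everything. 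This yields $\tr{\omega_0}{\omega}\le C' e^{-C\psi}$, hence uniform equivalence of $\omega$ and $\omega_0$ on compact subsets of $M\setminus E$; the higher-order estimates of \cite{G} and convergence via monotonicity of $\varphi+Cte^{-t}$ then follow. Your proposal gestures at ``a barrier built from a quasi-psh function with analytic singularities along $E$'' but never deploys it in the trace estimate, which is precisely where it is needed.
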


As an immediate corollary, we see that every smooth Hermitian minimal model of general type has a closed positive current which is a K\"ahler-Einstein metric away from the null locus of $K_M$. Additionally, $\omega_{KE}$ is unique in a sense that will be defined at the end of the introduction. The statement that the null locus of $K_M$ is the smallest choice for $E$ follows from the recent work of Collins-Tosatti \cite{CT}.

 In dimension $n = 2$, $M$ is projective. This is not true in general for $n > 2$. If $M$ is K\"ahler and we start the flow with a K\"ahler metric this is the result of Tsuji \cite{Ts} and Tian-Zhang \cite{TZ}. The difference in the above theorem is that $M$ need not be K\"ahler. If $M$ is K\"ahler and the initial metric is not K\"ahler, the main theorem implies that the Chern-Ricci flow still tends to the same limit as in the work of Tsuji and Tian-Zhang. This suggests that the Chern-Ricci flow is a natural object of study. 

We now provide a brief outline of the proof. As in the K\"ahler case, we reduce to a complex parabolic Monge-Amp\`ere equation
\begin{equation*}
\ddt \varphi = \log \frac{(\hat{\omega}_t + \ddbar \varphi)^n}{\Omega} - \varphi, \ \ \ \varphi |_{t=0} = 0, \ \ \ \hat{\omega}_t + \ddbar \varphi > 0,
\end{equation*}
where $\omegahatt$ is a reference metric and $\Omega$ is a volume form. Following the K\"ahler case we have uniform upper bounds for $\varphi, \dot\varphi$ and $\omega^n$. Applying a trick from Collins-Tosatti \cite{CT}, we find a closed positive current
\begin{equation*}
T = \omegahatinf + \ddbar \psi \in -c_1^{BC}(M)
\end{equation*}
with $T \geq c_0 \omega$ as currents on $M$. Here $\psi$ is is an upper-semi continuous function in $L^1(M)$ with $\sup_M \psi = 0$ and is smooth away from $E = \mathrm{Null}(K_M)$. We find uniform bounds for $\varphi, \dot\varphi$ and $\omega^n$ in terms of $\psi$. Letting $\tphi = \varphi - \psi$ and using the Phong-Sturm term \cite{PS}
\begin{equation*}
\frac{1}{\tphi + C_0}
\end{equation*}
we define the quantity 
\begin{equation*}
Q = \log \tr {\omega_0} \omega - A \tphi + \frac{1}{\tphi + C_0}
\end{equation*}
as in \cite{TW2}. Using the maximum principle we obtain the estimate
\begin{equation*}
\tr {\omega_0} \omega \leq \frac{C'}{e^{C\psi}}.
\end{equation*}
Applying the higher order estimates from \cite{G} and the bounds for $\dot{\varphi}$ on compact subsets of $M\setminus E$, we prove smooth convergence. We also have the following uniqueness result which follows immediately:
\begin{theorem}\label{uniquenessthm}
$\omega_{KE}$ is the unique closed, positive current on $M$ smooth on $M \setminus E$ satisfying
\begin{enumerate}[label=\emph{(\roman*)}]
\item $\omega_{KE} = - \Ric (\omega_{KE})$ on $M\setminus E$ and
\item $\frac{1}{C_\varepsilon} e^{\varepsilon \psi} \Omega \leq \omega_{KE}^n \leq C \Omega$ for all $\varepsilon \in (0,1]$ on $M \setminus E$.
\end{enumerate}
This is independent of choice of $\Omega$. 
\end{theorem}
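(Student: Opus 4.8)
The plan is to deduce Theorem~\ref{uniquenessthm} from uniqueness of solutions to a complex Monge--Amp\`ere equation in the big Bott--Chern class $-c_1^{BC}(M)$, in the spirit of Tsuji \cite{Ts} and Tian--Zhang \cite{TZ}. I would begin by observing that (ii) is insensitive to $\Omega$: any two smooth positive volume forms on $M$ are mutually bounded, so replacing $\Omega$ only alters the constants $C$ and $C_\varepsilon$. Thus we may fix once and for all a smooth volume form $\Omega$ whose Chern--Ricci form $-\ddbar \log \Omega$ equals $-\omegahatinf$; such an $\Omega$ exists because $-\ddbar \log \Omega$ always represents $c_1^{BC}(M)$, two representatives of a Bott--Chern class differ by $\ddbar$ of a smooth function, and $[\omegahatinf] = -c_1^{BC}(M)$.

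Let $S$ be any closed positive current on $M$, smooth on $M \setminus E$, satisfying (i) and (ii). Define $F$ on $M \setminus E$ by $S^n = e^F \Omega$. Then (i) reads $S = -\Ric(S) = \ddbar F + \omegahatinf$ on $M \setminus E$, so $F$ is $\omegahatinf$-plurisubharmonic there, and (ii) gives $F \leq C$ and $F \geq \varepsilon \psi - C_\varepsilon$ for all $\varepsilon \in (0,1]$. Since $E$ is a proper analytic subset and $F$ is $\omegahatinf$-psh and locally bounded above near $E$, it extends to a global $\omegahatinf$-psh function on $M$ with $S = \omegahatinf + \ddbar F$; in particular $S \in -c_1^{BC}(M)$, and $S^n = e^F \Omega$ on $M \setminus E$ becomes
\begin{equation*}
( \omegahatinf + \ddbar F )^n = e^F \Omega \qquad \text{on } M \setminus E.
\end{equation*}
The upper bound in (ii) forces both measures to put no mass on the pluripolar set $E$, so this extends to an identity of non-pluripolar Monge--Amp\`ere measures on all of $M$. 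The lower bound $F \geq \psi - C_1$ (case $\varepsilon = 1$), together with the properties of $T = \omegahatinf + \ddbar \psi$ coming from its construction via Collins--Tosatti \cite{CT}, shows that $F$ has full Monge--Amp\`ere mass. Finally, the limit $\omega_{KE}$ of Theorem~\ref{maintheorem} satisfies (i) trivially and (ii) by the uniform two-sided bounds on $\dot\varphi$ (hence on $\omega^n$) from the outline above, so its potential $\varphi_\infty$ is a solution of exactly the same type.

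Uniqueness is then a comparison-principle argument. Given two such potentials $F, F'$ with $(\omegahatinf + \ddbar F)^n = e^F \Omega$ and $(\omegahatinf + \ddbar F')^n = e^{F'} \Omega$, if they were not equal we could assume $U := \{ F > F' \}$ has positive Lebesgue measure. On $U$ we have $e^F \Omega > e^{F'} \Omega$ pointwise, hence $\int_U (\omegahatinf + \ddbar F)^n > \int_U (\omegahatinf + \ddbar F')^n$, whereas the comparison principle for non-pluripolar products of full-mass potentials gives $\int_{\{ F' < F \}} (\omegahatinf + \ddbar F)^n \leq \int_{\{ F' < F \}} (\omegahatinf + \ddbar F')^n$, a contradiction. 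Hence $F = F'$, so $S = S' = \omega_{KE}$.

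The step I expect to be the main obstacle is justifying this pluripotential theory in the present Hermitian setting, where $M$ need not be K\"ahler: one must work with the big Bott--Chern class $-c_1^{BC}(M)$ and check that the extension of $F$ across $E$, the passage from the smooth Einstein equation on $M \setminus E$ to a global non-pluripolar Monge--Amp\`ere equation, and the comparison principle are all available without a global K\"ahler form. In dimension two $M$ is projective and this is classical; in general the relevant inputs are precisely the estimates behind Theorem~\ref{maintheorem} --- in particular $\tr{\omega_0}{\omega} \leq C' e^{-C\psi}$ and the control of $\dot\varphi$ in terms of $\psi$ --- which confine $\omega_{KE}^n$ to be absolutely continuous with density squeezed between $\tfrac{1}{C_\varepsilon} e^{\varepsilon \psi}$ and $C$, exactly the hypotheses (ii). Once these points are settled, the remainder is formal, which is why the theorem follows immediately from Theorem~\ref{maintheorem}.
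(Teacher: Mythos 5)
Your overall strategy---recast both currents as solutions of a global Monge--Amp\`ere equation in the big class $-c_1^{BC}(M)$ and invoke the comparison principle for non-pluripolar products---is genuinely different from the paper's, and it has a real gap that you yourself flag but do not close. The comparison principle and the monotonicity/full-mass theory for non-pluripolar Monge--Amp\`ere measures in big classes is K\"ahler (BEGZ-type) technology; here $M$ is merely Moishezon, and importing that machinery to a big Bott--Chern class on a non-K\"ahler Hermitian manifold is precisely the difficulty the paper is structured to avoid. Deferring it as ``the main obstacle'' leaves the proof incomplete. Moreover, even granting the framework, your full-mass claim is unsubstantiated: the bound $F \geq \varepsilon\psi - C_\varepsilon$ for all $\varepsilon\in(0,1]$ gives at best vanishing Lelong numbers for $F$, which in a big class does not imply full Monge--Amp\`ere mass, and the Koll\'ar--Demailly-type current $T=\omegahatinf+\ddbar\psi$ with analytic singularities need not itself have full mass. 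Without full mass (or matched singularity types) the comparison inequality $\int_{\{F'<F\}}(\omegahatinf+\ddbar F)^n \leq \int_{\{F'<F\}}(\omegahatinf+\ddbar F')^n$ is not available.

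The paper's proof is elementary and bypasses all of this. Writing $\theta=\log(\omega_\infty^n/\Omega)$ and $\tilde\theta=\log(\tilde\omega_\infty^n/\Omega)$, condition (i) gives $\omega_\infty=\omegahatinf+\ddbar\theta$ and $\tilde\omega_\infty=\omegahatinf+\ddbar\tilde\theta$ on $M\setminus E$, exactly as in your first step. But instead of globalizing, one sets
\begin{equation*}
Q=\theta-(1-\delta)\tilde\theta-\delta\varepsilon\psi ,
\end{equation*}
which by (ii) is bounded below and tends to $+\infty$ as $x\to E$, so it attains a minimum at some $x_0\in M\setminus E$. At $x_0$ one has $\ddbar Q\geq 0$, and decomposing $\omega_\infty=(1-\delta)\tilde\omega_\infty+\delta(\omegahatinf+\varepsilon\ddbar\psi)+\ddbar Q$ together with Lemma \ref{psifacts} (ii) (namely $\omegahatinf+\varepsilon\ddbar\psi\geq \varepsilon c_0\omega_0$) yields $\theta-\tilde\theta\geq n\log(1-\delta)$ there, hence $Q\geq n\log(1-\delta)-\delta C_\varepsilon\geq-\varepsilon$ everywhere for $\delta$ small. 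Letting $\delta\to0$ and then $\varepsilon\to0$ gives $\theta\geq\tilde\theta$ pointwise on $M\setminus E$, and symmetry gives equality. The barrier $-\delta\varepsilon\psi$ plays the role that full mass and the comparison principle play in your argument, at no foundational cost. If you want to salvage your route, you would need to either prove the relevant non-pluripolar comparison principle in this Hermitian setting or pass to a K\"ahler modification; the direct maximum principle is far shorter.
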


\section{Preliminaries}

In this section we will review some of the notation used in the proof of the main theorem. For a more detailed discussion, we refer the reader to \cite{TW2}. Every Hermitian metric $g$ has an associated $(1,1)$ form 
\begin{equation*}
\omega = \sqrt{-1} g_{i\ov j} dz^i \wedge dz^{\ov j}.
\end{equation*}
The metric also has a Chern connection $\nabla$ with Christoffel symbols 
\begin{equation*}
\Gamma_{ij}^k = g^{\ov l k} \partial_{i} g_{j\ov{l}}. 
\end{equation*}
The torsion of the metric is the tensor
\begin{equation*}
T_{ij}^k = \Gamma_{ij}^k - \Gamma_{ji}^k.
\end{equation*}
If $g$ is a K\"ahler metric, then the torsion of $g$ is zero. The Chern curvature of $g$ is
\begin{equation*}
{R_{k\ov l i}}^p = -\partial_{\ov l} \Gamma_{ki}^p
\end{equation*}
and it obeys the usual commutation identities for curvature. For example,
\begin{equation*}
[\nabla_k, \nabla_{\ov l}] X^i = {R_{k \ov l j}}^i X^j.
\end{equation*}
The Chern-Ricci curvature of $g$ is
\begin{equation*}
R_{k\ov l} = g^{\ov j i} R_{k \ov l i \ov j} = - \partial_k \partial_{\ov l} \log \det g
\end{equation*}
with associated Chern-Ricci form
\begin{equation*}
\Ric (\omega) = \sqrt{-1} R_{k\ov l} dz^k \wedge dz^{\ov l}.
\end{equation*}

\section{Estimates}

First, we need to choose an appropriate reference metric. Since $-c_1^{BC}(M)$ is nef and $K_M$ is big, M is Moishezon, and we can apply \cite{KMM} to find a non-negative $(1,1)$ form $\hat{\omega}_\infty$ such that 
\begin{equation*}
[\hat{\omega}_\infty] = - c_1^{BC}(M).
\end{equation*}
Additionally, there exists a smooth volume form $\Omega$ such that
\begin{equation*}
\ddbar \log \Omega = \hat{\omega}_\infty, \ \ \ \int_M \Omega = \int_M \omega_0^n.
\end{equation*}
Define a family of reference metrics 
\begin{equation*}
\hat{\omega}_t = e^{-t} \omega_0 + (1-e^{-t})\hat{\omega}_\infty.
\end{equation*}
If $\varphi$ solves
\begin{equation}\label{pma}
\ddt \varphi = \log \frac{(\hat{\omega}_t + \ddbar \varphi)^n}{\Omega} - \varphi, \ \ \ \varphi |_{t=0} = 0, \ \ \ \hat{\omega}_t + \ddbar \varphi > 0
\end{equation}
then $\omega = \omegahatt + \ddbar \varphi$ solves the normalized Chern-Ricci flow \eqref{ncrf}.

We require some standard estimates for $\varphi$ that follow as in the K\"ahler case \cite{Ts, TZ}. For a recent exposition of this result, see \cite{SWnotes}.

\begin{lemma}\label{phibddabove}
There exists a uniform $C$ such that on $M \times [0,\infty)$,
\begin{enumerate}[label=\emph{(\roman*)}]
\item $\varphi \leq C$
\item $\dot{\varphi} \leq Cte^{-t}$ when $t \geq t_1$ for some $t_1 > 0$. In particular, $\dot{\varphi} \leq C$
\item $\omega^n \leq C\Omega$.
\end{enumerate}
\end{lemma}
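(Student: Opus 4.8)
The plan is to establish the three bounds by adapting the maximum-principle arguments from the Kähler-Ricci flow case (see \cite{Ts, TZ, SWnotes}), being careful only where torsion enters. The key observation is that $\hat\omega_t$ degenerates at worst to the non-negative form $\hat\omega_\infty$, and the parabolic equation \eqref{pma} has the damping term $-\varphi$ on the right-hand side, which is what produces uniform (rather than merely linear-in-$t$) bounds.

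For $(i)$, I would differentiate the equation in time to control $\dot\varphi$ first and then integrate. Set $\theta = \ddt\hat\omega_t = e^{-t}(\hat\omega_\infty - \omega_0)$, so that $\ddt(\dot\varphi) = \Delta_\omega \dot\varphi + \tr{\omega}{\theta} - \dot\varphi$, where $\Delta_\omega$ is the complex Laplacian of $\omega$ (no torsion terms appear here since $\Delta_\omega f = g^{\ov j i}\partial_i\partial_{\ov j} f$ acts on the scalar $\dot\varphi$). Since $|\tr{\omega}{\theta}|$ is not yet controlled, the cleaner route is to bound the quantity $(e^t - 1)\dot\varphi + \varphi + nt$ or a similar combination: one checks that $\ddt\big((e^t-1)\dot\varphi\big) = \Delta_\omega\big((e^t-1)\dot\varphi\big) + (e^t-1)\tr{\omega}{\theta} + \dot\varphi$ and that $(e^t-1)\theta = \hat\omega_\infty - e^{-t}(e^t-1)\omega_0 \leq \hat\omega_\infty \leq \hat\omega_t + e^{-t}\hat\omega_\infty$ for $t$ large, so $(e^t-1)\tr{\omega}{\theta} \leq \tr{\omega}{\hat\omega_t} + e^{-t}\tr{\omega}{\hat\omega_\infty} = n - \Delta_\omega\varphi + e^{-t}\tr{\omega}{\hat\omega_\infty}$; applying the maximum principle to $(e^t-1)\dot\varphi + \varphi$ (plus a small multiple of $\psi$ or a cutoff to absorb the $e^{-t}\tr{\omega}{\hat\omega_\infty}$ error, or simply noting $\hat\omega_\infty\leq C\omega_0\leq Ce^t\hat\omega_t$) gives an upper bound of the form $(e^t - 1)\dot\varphi + \varphi \leq Ct$. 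Combined with a lower bound $\varphi \geq -C$ coming from the fact that $\omega^n = e^{\dot\varphi + \varphi}\Omega$ and $\int\omega^n$ is controlled, this yields $(ii)$ in the form $\dot\varphi \leq Cte^{-t}$ for $t\geq t_1$, hence $\dot\varphi\leq C$ everywhere by continuity on $[0,t_1]$, and then integrating $\dot\varphi\leq C$ from the initial condition $\varphi|_{t=0}=0$ gives $(i)$.

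Finally $(iii)$ is immediate once $(i)$ and $(ii)$ are in hand: from \eqref{pma} we have $\log\frac{\omega^n}{\Omega} = \dot\varphi + \varphi \leq C$, so $\omega^n \leq C\Omega$. The main obstacle is the bookkeeping around the degeneracy of $\hat\omega_t$: unlike the standard Kähler-Ricci flow on a minimal model, here $\hat\omega_\infty$ is only a smooth semi-positive representative obtained from \cite{KMM} (using that $M$ is Moishezon with $-c_1^{BC}(M)$ nef), not a Kähler form, so one cannot assume $\hat\omega_t \geq \delta\omega_0$; the arguments above only ever use $\hat\omega_t \leq C\omega_0$ and the non-negativity of $\hat\omega_\infty$, together with $\ddbar\log\Omega = \hat\omega_\infty$, so they go through. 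The torsion of $\omega_0$ plays no role at this stage because every maximum-principle computation is applied to scalar functions, for which $\Delta_\omega$ is the naive Laplacian; torsion will only become an issue in the later second-order estimate for $\tr{\omega_0}{\omega}$.
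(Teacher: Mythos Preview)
Your outline has the right overall shape but contains a genuine gap in the logical order and a sign error in the key quantity. The paper (and the standard K\"ahler argument in \cite{Ts,TZ,SWnotes}) proves $(i)$ \emph{first}, directly: at a maximum of $\varphi$ with $t_0>0$ one has $0\le\dot\varphi\le\log(\hat\omega_t^n/\Omega)-\varphi\le C-\varphi$, so $\varphi\le C$. Only then does one turn to $(ii)$, using the quantity
\[
Q=(e^t-1)\dot\varphi-\varphi-nt,
\]
with a \emph{minus} $\varphi$. A direct computation (using $(e^t-1)\theta=\hat\omega_t-\omega_0$ exactly, not an inequality) gives $(\partial_t-\Delta)Q=-\tr{\omega}{\omega_0}<0$, so $Q\le Q|_{t=0}=0$ and hence $\dot\varphi\le(\varphi+nt)/(e^t-1)\le Cte^{-t}$, where the last step uses the already-established upper bound $(i)$. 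With your sign choice $(e^t-1)\dot\varphi+\varphi$, the evolution inequality retains an uncontrolled $2\dot\varphi$ term and does not close.

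The more serious issue is your proposed substitute for $(i)$: you invoke a lower bound $\varphi\ge -C$ ``coming from the fact that $\omega^n=e^{\dot\varphi+\varphi}\Omega$ and $\int\omega^n$ is controlled.'' In the Hermitian setting $\int_M\omega^n$ is \emph{not} a cohomological invariant along the flow, so there is no a priori control on it; and even in the K\"ahler case an integral bound on $e^{\dot\varphi+\varphi}$ does not yield a pointwise lower bound on $\varphi$. In fact no uniform lower bound on $\varphi$ exists here: Lemma~\ref{tphibelow} only gives $\varphi\ge\varepsilon\psi-C_\varepsilon$, and $\psi\to-\infty$ on $E$. So the route ``prove $(ii)$ first, then integrate to get $(i)$'' cannot work; you must obtain $(i)$ independently as above.
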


We need a version of Tsuji's trick \cite{Ts} that will apply in this non-K\"ahler setting. The new trick comes from the work of Collins--Tosatti \cite{CT} and a theorem of Demailly \cite{D} and Demailly-P\u{a}un \cite{DP}:

Since $K_M$ is big there exists a K\"ahler current
\begin{equation}\label{defofT}
T = \omegahatinf + \ddbar \psi \geq c_0 \omega_0
\end{equation}
for some $c_0 > 0$ as currents on $M$ where $\psi$ is an upper-semi continuous function in $L^1(M)$. Moreover, $\psi$ can be chosen to be smooth away from a closed analytic subvariety
\begin{equation*}
E = \{ \psi = -\infty \}.
\end{equation*}
By adding a constant, we can assume that $\sup_M \psi = 0$. Since $M$ is Moishezon, it is in Fujiki's class $\mathcal{C}$  (M is bimeromorphic to a compact K\"ahler manifold) \cite{F}. Using this fact, the main theorem of Collins-Tosatti implies that we can take
\begin{equation*}
E = \mathrm{Null}(K_M)
\end{equation*} 
and that this is the smallest possible choice for $E$ \cite{CT}.

From the definition of $T$ and $\psi$ we have the following useful facts.

\begin{lemma}\label{psifacts}
There exists a uniform $C > 0$ such that
\begin{enumerate}[label=\emph{(\roman*)}]
\item $\ddbar \psi \geq - C \omega_0$ as currents on $M$ and
\item $\omegahatinf + \varepsilon \ddbar \psi \geq \varepsilon c_0 \omega_0$ as currents on $M$ for all $\varepsilon \in (0,1]$.
\end{enumerate}
\end{lemma}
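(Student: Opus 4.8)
The plan is to derive both statements directly from the defining property \eqref{defofT} of the K\"ahler current $T$, namely $T = \omegahatinf + \ddbar\psi \geq c_0\omega_0$ as currents on $M$, together with the smoothness of $\omegahatinf$ and $\omega_0$ and the compactness of $M$. No maximum principle or analysis of $\psi$ is needed; these are purely formal manipulations of the inequality \eqref{defofT}, carried out at the level of currents.

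For (i), I would first note that since $c_0 \omega_0 \geq 0$, the inequality $\omegahatinf + \ddbar\psi \geq c_0\omega_0$ immediately gives $\ddbar\psi \geq -\omegahatinf$ as currents. Because $\omegahatinf$ is a smooth real $(1,1)$ form on the compact manifold $M$, there is a uniform constant $C > 0$ with $\omegahatinf \leq C\omega_0$ pointwise, and hence $\ddbar\psi \geq -C\omega_0$ as currents, which is (i). For (ii), I would write, for any $\varepsilon \in (0,1]$,
\[
\omegahatinf + \varepsilon\,\ddbar\psi = (1-\varepsilon)\,\omegahatinf + \varepsilon\left(\omegahatinf + \ddbar\psi\right) \geq (1-\varepsilon)\,\omegahatinf + \varepsilon c_0 \omega_0,
\]
where the inequality uses \eqref{defofT} on the last term. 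Since $\omegahatinf \geq 0$ and $1-\varepsilon \geq 0$, the current $(1-\varepsilon)\omegahatinf$ is non-negative, so the right-hand side is $\geq \varepsilon c_0 \omega_0$, giving (ii).

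There is essentially no obstacle here; the only point requiring a word of care is that all the inequalities are to be read in the sense of currents, since $\psi$ is merely upper semicontinuous and in $L^1(M)$ rather than smooth. But because we only ever add the smooth forms $\omegahatinf$ and $\omega_0$ to $\ddbar\psi$ and take non-negative linear combinations, each step above is legitimate for currents. The constant $C$ in (i) depends only on $\omegahatinf$ and $\omega_0$, while the $c_0$ in (ii) is the one fixed in \eqref{defofT}, so both are uniform as claimed.
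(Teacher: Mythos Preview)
Your proof is correct and is exactly the argument the paper has in mind: the paper does not even write out a proof of this lemma, simply introducing it with ``From the definition of $T$ and $\psi$ we have the following useful facts,'' and your derivation from \eqref{defofT} together with the non-negativity of $\omegahatinf$ and compactness of $M$ is the intended one-line justification.
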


We can find lower bounds for $\varphi, \tphi$ and $\omega^n$ in terms of $\psi$ and $\varepsilon.$
\begin{lemma}\label{tphibelow}
There exists a uniform constant $C_\varepsilon$ depending on $\varepsilon$ such that on $M \times [0,\infty)$,
\begin{enumerate}[label=\emph{(\roman*)}]
\item $\varphi \geq \varepsilon \psi -C_\varepsilon$
\item $\dot{\varphi} \geq \varepsilon \psi - C_\varepsilon$
\item $\omega^n \geq \frac{1}{C_\varepsilon} e^{\varepsilon \psi} \Omega$.
\end{enumerate}
\end{lemma}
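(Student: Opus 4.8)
The plan is to establish part (iii) first by a single application of the maximum principle, and then read off (i) and (ii) from (iii) together with the upper bounds of Lemma~\ref{phibddabove}. The reason for working with $\omega^n$ rather than directly with $\varphi$ is that the quantity
\[
H := \dot{\varphi} + \varphi = \log \frac{\omega^n}{\Omega}
\]
has a clean evolution: since $\Omega$ is independent of $t$, $\ddbar \log \Omega = \omegahatinf$, and $\ddbar \log \omega^n = - \Ric(\omega)$, one gets $\ddbar H = - \Ric(\omega) - \omegahatinf$, while the flow equation $\ddt \omega = - \Ric(\omega) - \omega$ gives $\ddt H = \tr{\omega}{\ddt \omega} = - \tr{\omega}{\Ric(\omega)} - n$. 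Both identities hold for an arbitrary Hermitian metric, so the absence of the K\"ahler condition is invisible at this step.

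Fix $\varepsilon \in (0,1]$ and set $u = H - \varepsilon \psi$. Since $\psi$ is smooth on $M \setminus E$ and tends to $-\infty$ along $E$, while $H$ is bounded on $M \times [0,T]$ for each fixed $T$, the function $u$ is continuous on $(M \setminus E) \times [0,T]$ and blows up to $+\infty$ near $E$; hence its infimum over the compact set $M \times [0,T]$ is attained at some $(x_0, t_0)$ with $x_0 \notin E$, where every quantity in sight is smooth. If $t_0 = 0$, then $u(x_0, 0) = \log \frac{\omega_0^n}{\Omega} - \varepsilon \psi \geq - C$, using $\psi \leq 0$ and that $\omega_0^n$ and $\Omega$ are comparable smooth volume forms. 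If $t_0 > 0$, then at $(x_0, t_0)$ we have $\ddbar u \geq 0$ and $\ddt u \leq 0$, i.e. $\ddbar H \geq \varepsilon \ddbar \psi$ and $\ddt H \leq 0$. The first inequality gives $- \Ric(\omega) \geq \omegahatinf + \varepsilon \ddbar \psi \geq \varepsilon c_0 \omega_0$ by Lemma~\ref{psifacts}(ii), hence $\tr{\omega}{\Ric(\omega)} \leq - \varepsilon c_0 \tr{\omega}{\omega_0}$; the second gives $\tr{\omega}{\Ric(\omega)} \geq - n$. Combining these, $\tr{\omega}{\omega_0} \leq n / (\varepsilon c_0)$ at $(x_0, t_0)$, and the arithmetic--geometric mean inequality $\frac{\omega_0^n}{\omega^n} \leq \left( \frac{1}{n} \tr{\omega}{\omega_0} \right)^n$ then forces $\omega^n \geq (\varepsilon c_0)^n \omega_0^n \geq \frac{1}{C_\varepsilon} \Omega$ at $(x_0, t_0)$, so that $u(x_0, t_0) \geq H(x_0, t_0) = \log \frac{\omega^n}{\Omega} \geq - C_\varepsilon$. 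In either case $\inf_{M \times [0,T]} u \geq - C_\varepsilon$ with $C_\varepsilon$ independent of $T$, and letting $T \to \infty$ gives $\dot{\varphi} + \varphi - \varepsilon \psi \geq - C_\varepsilon$ on $M \times [0,\infty)$, which is (iii) after exponentiating.

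Parts (i) and (ii) are then immediate: writing $\varphi = (\dot{\varphi} + \varphi) - \dot{\varphi}$ and using $\dot{\varphi} \leq C$ from Lemma~\ref{phibddabove}(ii) gives $\varphi \geq \varepsilon \psi - C_\varepsilon$, and writing $\dot{\varphi} = (\dot{\varphi} + \varphi) - \varphi$ and using $\varphi \leq C$ from Lemma~\ref{phibddabove}(i) gives $\dot{\varphi} \geq \varepsilon \psi - C_\varepsilon$ (after enlarging $C_\varepsilon$). The only step that requires genuine care is the maximum principle argument in the second paragraph: one must be sure the singular locus of $\psi$ does not interfere, which is exactly why it matters that $\psi$ is smooth on $M \setminus E$ with $\{\psi = -\infty\} = E$ rather than merely $L^1$. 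Everything else is a routine transcription of the K\"ahler computation of Tsuji~\cite{Ts} and Tian--Zhang~\cite{TZ}; see also \cite{SWnotes}.
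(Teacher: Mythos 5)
Your proof is correct and follows essentially the same route as the paper: both apply the minimum principle to $Q=\dot\varphi+\varphi-\varepsilon\psi=\log\frac{\omega^n}{e^{\varepsilon\psi}\Omega}$, use Lemma \ref{psifacts}(ii) and the arithmetic--geometric mean inequality at the minimum point, and deduce (i) and (ii) from (iii) via Lemma \ref{phibddabove}. The only cosmetic difference is that you read off $\partial_t Q\le 0$ and $\ddbar Q\ge 0$ separately rather than assembling the evolution equation for $(\partial_t-\Delta)Q$ from those of $\varphi$ and $\dot\varphi$.
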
 

\begin{proof}
Define
\begin{equation*}
Q = \dot{\varphi} + \varphi - \varepsilon \psi = \log \frac{\omega^n}{e^{\varepsilon \psi} \Omega}.
\end{equation*}
If we can find a uniform lower bound for $Q$ we immediately prove (iii). (i) and (ii) then follow from Lemma \ref{phibddabove}. Computing the evolution equation for Q,
\begin{align*}
\left(\ddt - \Delta \right) \varphi &= \dot{\varphi} - n + \tr {\omega} {\omegahatt} \\
\left(\ddt - \Delta \right) \dot\varphi &= \tr {\omega} {\omegahatinf - \omegahatt} - \dot\varphi.
\end{align*}
Adding these,
\begin{align*}
\left( \ddt - \Delta \right) Q &= \tr \omega \omegahatinf - n + \tr \omega {\varepsilon \ddbar\psi} \\
&= \tr \omega {\left(\omegahatinf + \varepsilon \ddbar \psi \right)} - n \\
&\geq \varepsilon c_0 \tr \omega \omega_0 - n.
\end{align*}
Since $Q \to \infty$ as $x \to E$, $Q$ achieves a spatial minimum for each fixed time $t_0$. If $Q$ attains a minimum at the point $(x_0,t_0)$ in $M \setminus E$ with $t_0 > 0$, at that point
\begin{equation*}
\tr {\omega(x_0,t_0)} {\omega_0(x_0,t_0)} \leq \frac{n}{\varepsilon c_0}.
\end{equation*}
Applying the geometric-arithmetic mean inequality,
\begin{equation*}
\left( \frac{\omega_0^n(x_0,t_0)}{\omega^n(x_0,t_0)}\right)^{1/n} \leq \frac{\tr {\omega(x_0,t_0)} {\omega_0(x_0,t_0)}}{n} \leq \frac{1}{\varepsilon c_0}.
\end{equation*}
This gives a uniform lower bound for $Q$ since
\begin{equation*}
Q(x_0,t_0) = \log \frac{\omega^n(x_0,t_0)}{e^{\varepsilon \psi (x_0,t_0)} \Omega(x_0,t_0)} \geq  \log \frac{\varepsilon c_0 \omega_0^n(x_0,t_0)}{e^{\varepsilon \psi (x_0,t_0)} \Omega(x_0,t_0)} \geq - C_\varepsilon.
\end{equation*}
\end{proof}

We define a family of positive $(1,1)$-currents which will be useful in bounding $\tr {\omega_0} \omega$. Let
\begin{equation*}
S_t = \omegahatt + \ddbar \psi = e^{-t} \omega_0 + \left(1-e^{-t}\right) \omegahatinf + \ddbar \psi.
\end{equation*}
\begin{lemma}\label{Sbelow}
There exists $T_0 > 0$ such that for all $t \geq T_0$
\begin{equation}\label{Sbeloweqn}
S_t \geq \frac{c_0}{2} \omega_0
\end{equation}
as currents on $M$.
\end{lemma}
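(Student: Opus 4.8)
The plan is to recognize $S_t$ as a small perturbation of the Kähler current $T$ from \eqref{defofT} by a smooth form whose size decays exponentially. First I would compute the difference directly: since $\omegahatt = e^{-t}\omega_0 + (1-e^{-t})\omegahatinf$ and $T = \omegahatinf + \ddbar\psi$, we have
\[
S_t - T \;=\; \omegahatt - \omegahatinf \;=\; e^{-t}\bigl(\omega_0 - \omegahatinf\bigr),
\]
so that $S_t = T + e^{-t}(\omega_0 - \omegahatinf)$ as currents on $M$.

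Next I would use that $T \geq c_0\omega_0$ together with a crude pointwise comparison of the fixed smooth form $\omegahatinf$ against $\omega_0$: by compactness of $M$ there is a constant $C_1 > 0$ with $0 \leq \omegahatinf \leq C_1 \omega_0$, hence $\omega_0 - \omegahatinf \geq (1 - C_1)\omega_0$ pointwise. Combining these,
\[
S_t \;\geq\; c_0\omega_0 + e^{-t}(1-C_1)\omega_0 \;=\; \bigl(c_0 - e^{-t}(C_1 - 1)\bigr)\omega_0
\]
as currents on $M$ (the inequality is preserved since we are only adding a smooth form admitting a pointwise lower bound to a positive current).

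Finally, I would choose $T_0 > 0$ large enough that $e^{-T_0}(C_1 - 1) \leq c_0/2$ — any $T_0 \geq 0$ works if $C_1 \leq 1$ — so that for all $t \geq T_0$ we obtain $S_t \geq \tfrac{c_0}{2}\omega_0$, which is \eqref{Sbeloweqn}. I do not anticipate a genuine obstacle here: the entire content is the identity $S_t = T + e^{-t}(\omega_0 - \omegahatinf)$ and the elementary fact that a bounded perturbation of a strictly positive current remains positive once the coefficient of the perturbation is small, so no maximum principle or PDE input is needed, in contrast with Lemmas \ref{phibddabove} and \ref{tphibelow}.
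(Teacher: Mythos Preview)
Your proof is correct and follows essentially the same approach as the paper: both write $S_t = T + e^{-t}(\omega_0 - \omegahatinf)$, use $T \geq c_0\omega_0$, and then choose $T_0$ large enough that the smooth perturbation $e^{-t}(\omega_0 - \omegahatinf)$ is bounded below by $-\tfrac{c_0}{2}\omega_0$. The only difference is cosmetic---you make the compactness bound $\omegahatinf \leq C_1\omega_0$ explicit, whereas the paper simply asserts that such a $T_0$ exists.
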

\begin{proof}
Choose $T_0$ sufficiently large so that for $t \geq T_0$
\begin{equation*}
e^{-t} \left(\omega_0 - \omegahatinf \right) \geq - \frac{c_0}{2} \omega_0.
\end{equation*}
Then
\begin{equation*}
S_t = \omegahatinf + \ddbar \psi + e^{-t} \left(\omega_0 - \omegahatinf \right) \geq \frac{c_0}{2} \omega_0 
\end{equation*}
as currents on $M$.
\end{proof}

Now we can bound $\tr {\omega_0} \omega$ using a trick from Phong-Sturm \cite{PS}.

\begin{lemma}\label{tracebound}
There exists uniform $C$ and $C'$ such that on $M \times [0,\infty)$,
\begin{equation}\label{eqntracebound}
\tr {\omega_0} \omega \leq \frac{C'}{e^{C\psi}}.
\end{equation}
Moreover, there exists uniform $C''$ such that on $M \times [0,\infty)$,
\begin{equation}\label{unifequiv}
\frac{e^{C\psi}}{C''}\omega_0 \leq \omega \leq \frac{C''}{e^{C\psi}} \omega_0.
\end{equation}
\end{lemma}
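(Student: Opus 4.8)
The plan is to prove \eqref{eqntracebound} by the parabolic maximum principle applied to the Phong--Sturm--perturbed quantity
\[
Q = \log \tr{\omega_0}{\omega} - A\tphi + \frac{1}{\tphi + C_0},
\]
as in \cite{TW2}, where $\tphi = \varphi - \psi$, the constant $C_0$ is fixed large enough that $\tphi + C_0 \geq 1$ everywhere (possible since Lemma \ref{tphibelow}(i) with $\varepsilon = 1$ gives $\tphi \geq -C_1$), and $A$ is a large constant to be chosen. I work on $M \times [T_0, T]$, with $T_0$ as in Lemma \ref{Sbelow} and $T > T_0$ arbitrary, and show that $\sup Q$ is bounded independently of $T$. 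Since $\varphi$ is a smooth solution, $\log \tr{\omega_0}{\omega}$ is bounded on $M \times [0,T]$, whereas $\tphi \to +\infty$ as $x \to E$; hence $Q$, extended by $-\infty$ on $E$, is upper semicontinuous on the compact set $M \times [T_0,T]$ and attains its maximum at some $(x_0,t_0)$ with $x_0 \notin E$. If $t_0 = T_0$, then $Q(x_0,t_0) \leq \sup_{M \times \{T_0\}} \log \tr{\omega_0}{\omega} + AC_1 + 1$, a bound depending only on the fixed data, so we may assume $t_0 > T_0$.

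I need three evolution inequalities. By the parabolic Schwarz-lemma computation for the Chern--Ricci flow with the (possibly non-K\"ahler) background metric $\omega_0$ (see \cite{TW2}, and \cite{G} for the accompanying higher-order theory), there is $C_0$ depending only on $\omega_0$ with
\[
\left(\ddt - \Delta\right)\log \tr{\omega_0}{\omega} \leq C_0 \tr{\omega}{\omega_0} + (\text{gradient terms built from the torsion of } \omega_0),
\]
$\Delta$ being the Laplacian of $\omega$. Using $S_t = \omegahatt + \ddbar \psi$ one computes, on $M \setminus E$,
\[
\left(\ddt - \Delta\right)\tphi = \dot\varphi - n + \tr{\omega}{S_t} \geq \dot\varphi - n + \tfrac{c_0}{2}\tr{\omega}{\omega_0} \qquad (t \geq T_0),
\]
by Lemma \ref{Sbelow}. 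And a direct computation for the Phong--Sturm term \cite{PS} gives
\[
\left(\ddt - \Delta\right)\frac{1}{\tphi + C_0} = -\frac{\left(\ddt - \Delta\right)\tphi}{(\tphi + C_0)^2} - \frac{2\,|\partial \tphi|^2_g}{(\tphi + C_0)^3}.
\]

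At $(x_0,t_0)$ we have $\partial Q = 0$, i.e. $\partial \log \tr{\omega_0}{\omega} = \beta\,\partial \tphi$ with $\beta := A + (\tphi+C_0)^{-2} \in [A, A+1]$. Substituting this into the torsion gradient terms and applying Cauchy--Schwarz and Young's inequality against the negative term $-\frac{2|\partial\tphi|^2_g}{(\tphi+C_0)^3}$ from the Phong--Sturm perturbation bounds them by $\frac{2|\partial\tphi|^2_g}{(\tphi+C_0)^3} + C\beta^2(\tphi+C_0)^3\,\frac{\tr{\omega}{\omega_0}}{(\tr{\omega_0}{\omega})^2}$; here the first summand is absorbed and the second is $\leq \tfrac{c_0}{8}A\,\tr{\omega}{\omega_0}$ \emph{provided} $\tr{\omega_0}{\omega}(x_0,t_0)$ is not too small relative to $(\tphi(x_0,t_0)+C_0)^{3/2}$. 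In the opposite (small) case, $\log\tr{\omega_0}{\omega}(x_0,t_0) \leq \tfrac32\log(\tphi(x_0,t_0)+C_0) + C(A)$, whence $Q(x_0,t_0) \leq \tfrac32\log(\tphi(x_0,t_0)+C_0) - A\tphi(x_0,t_0) + C(A) \leq C(A)$ since $\sup_{s \geq -C_1}(\tfrac32\log(s+C_0) - As) < \infty$. In the (large) case, the torsion terms having been absorbed, we handle $-A\dot\varphi$ coming from $\left(\ddt-\Delta\right)\tphi$ as follows: $\dot\varphi \leq C$ by Lemma \ref{phibddabove}, while $\dot\varphi = \log(\omega^n/\Omega) - \varphi$, together with $\varphi \leq C$ and the arithmetic--geometric inequality $\omega^n/\omega_0^n \geq (n/\tr{\omega}{\omega_0})^n$, gives the uniform lower bound $\dot\varphi \geq -n\log \tr{\omega}{\omega_0} - C$; since $\log x \leq \delta x + C_\delta$, the term $-A\dot\varphi$ and the remaining lower-order contributions (where one also uses $\tr{\omega}{S_t} \geq 0$ and $(\tphi+C_0)^2 \geq 1$) are absorbed into $\tfrac{c_0}{4}A\,\tr{\omega}{\omega_0}$. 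Choosing $A$ large enough that $C_0 - \tfrac{c_0}{4}A \leq -1$, the inequality $0 \leq \left(\ddt - \Delta\right)Q$ at $(x_0,t_0)$ forces $\tr{\omega}{\omega_0}(x_0,t_0) \leq C$; then the elementary inequality $\tr{\omega_0}{\omega} \leq n(\tr{\omega}{\omega_0})^{n-1}\,\omega^n/\omega_0^n$ and Lemma \ref{phibddabove}(iii) give $\log\tr{\omega_0}{\omega}(x_0,t_0) \leq C$, and hence, using $\tphi \geq -C_1$, $Q(x_0,t_0) \leq C + AC_1 + 1$. In either case $Q(x_0,t_0)$ is bounded independently of $T$.

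Letting $T \to \infty$, we obtain $Q \leq C$ on $M \times [T_0,\infty)$, hence $\log\tr{\omega_0}{\omega} \leq C + A\tphi = C + A\varphi - A\psi \leq C' - A\psi$ there, using $\varphi \leq C$; since $\varphi$ is smooth on the compact set $M \times [0,T_0]$ and $e^{-A\psi} \geq 1$, the bound $\tr{\omega_0}{\omega} \leq C' e^{-A\psi}$ holds on all of $M \times [0,\infty)$ after enlarging $C'$, which is \eqref{eqntracebound} with $C = A$. For \eqref{unifequiv}, the upper bound is \eqref{eqntracebound}; for the lower bound apply $\tr{\omega}{\omega_0} \leq n(\tr{\omega_0}{\omega})^{n-1}\,\omega_0^n/\omega^n$, bound the right-hand side using \eqref{eqntracebound} and $\omega^n \geq \tfrac{1}{C_1}e^{\psi}\Omega$ from Lemma \ref{tphibelow}(iii) with $\varepsilon = 1$ to get $\tr{\omega}{\omega_0} \leq C'' e^{-C'\psi}$ for some $C'$, and conclude $\omega \geq (\tr{\omega}{\omega_0})^{-1}\omega_0 \geq \tfrac{1}{C''}e^{C'\psi}\omega_0$; since $\psi \leq 0$ one replaces $C$ and $C'$ by their maximum and renames to obtain \eqref{unifequiv}. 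I expect the absorption of the torsion-induced gradient terms at the maximum point — precisely, keeping the $(\tphi+C_0)^3$ factor under control via the dichotomy on the size of $\tr{\omega_0}{\omega}(x_0,t_0)$ — to be the only real obstacle; the rest follows the K\"ahler template of \cite{Ts, TZ} with the modifications introduced in \cite{TW2, CT}.
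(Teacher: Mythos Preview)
Your proposal is correct and follows essentially the same approach as the paper: the same Phong--Sturm--perturbed quantity $Q$, the same use of $S_t \geq \tfrac{c_0}{2}\omega_0$ from Lemma \ref{Sbelow}, the same substitution of $\partial Q = 0$ into the torsion gradient term, and the same dichotomy on the size of $(\tr{\omega_0}{\omega})^2$ versus $(\tphi+C_0)^3$. The only cosmetic difference is at the maximum point in the ``large'' case: the paper keeps $-(A+1)\dot\varphi \leq C\lvert\log(\Omega/\omega^n)\rvert + C$, bounds $\tr{\omega}{\omega_0}$ by this, and then invokes the boundedness of $x\lvert\log x\rvert^{n-1}$ for small $x$, whereas you bound $-\dot\varphi \leq n\log\tr{\omega}{\omega_0} + C$ via AM--GM and absorb the logarithm through $\log x \leq \delta x + C_\delta$; both routes yield $\tr{\omega}{\omega_0}(x_0,t_0) \leq C$ and the argument closes identically thereafter.
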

\begin{proof}
We begin by calculating the evolution equation for $\log \tr {\omega_0} \omega$ following a method similar to \cite{TW2}.
\begin{align}\label{bigevoeqn}
\left(\ddt - \Delta \right)& \log \tr {\omega_0} \omega \nonumber \\
 = &\frac{1}{\tr {\omega_0} \omega} \Bigg( \Big[ -g^{\overline{j}p} g^{\overline{q}i} (g_0)^{\overline{l}k} (\nabla_0)_k g_{i\overline{j}} (\nabla_0)_{\ov l} g_{p\overline{q}} \nonumber \\
 & \ + \frac{1}{\tr {\omega_0} \omega} g^{\overline{l}k} \covaro_k (\tr {\omega_0} \omega) \covaro_{\ov l} (\tr {\omega_0} \omega) \nonumber \\ 
 & \ - 2 \mathrm{Re}\left( g^{\ov j i} (g_0)^{\ov l k} (T_0)^p_{ki} (\nabla_0)_{\ov l} g_{p\ov j} \right)  - g^{\ov j i} (g_0)^{\ov l k} g_{p\ov q} (T_0)^p_{ik} \overline{(T_0)^q_{jl}} \Big] \nonumber\\
 & + \Big[ g^{\ov j i} (g_0)^{\ov l k} g_{k \ov q} \left( (\nabla_0)_i \overline{(T_0)^q_{jl}} - (R_0)_{i\ov l p \ov j} (g_0)^{\ov q p} \right)  \Big]   \nonumber   \\
 & - \Big[ g^{\ov j i} \go^{\ov l k} e^{-t} \Big( \covaro_i \left( \ov{\To_{jl}^p} \go_{k\ov p} \right) + \covaro_{\ov l} \left( \To_{ik}^p \go_{p \ov j} \right) \Big) \nonumber \\
 & \ -  g^{\ov j i} \go^{\ov l k} e^{-t} \ov{ \To_{jl}^q} \ov{\To_{ik}^p} \go_{p\ov q} \Big] - \tr {\omega_0} \omega \Bigg).
\end{align}
There two differences between this equation and the one in \cite{TW2}.  The third term in square brackets has a factor of $e^{-t}$ since
\begin{equation}\label{theeminust}
\ov \partial \omega = \ov \partial \omegahatt = e^{-t} \ov \partial \omega_0.
\end{equation}
Also, because we are considering the normalized Chern-Ricci flow we have the final $-\tr {\omega_0} \omega$ term.

Let $(I)$ denote the first term in square brackets in \eqref{bigevoeqn}, $(II)$ the second, and $(III)$ the third, all including the $\frac{1}{\tr {\omega_0} \omega}$ out front. Using the estimates from \cite{TW2} Proposition 3.1, 
\begin{align}\label{estI}
(I) & \leq  \frac{2e^{-t}}{(\tr {\omega_0} \omega)^2} \mathrm{Re} \left( g^{\ov l k} (T_0)^p_{kp} \partial_{\ov l} \tr {\omega_0} \omega \right)  \nonumber \\
& \leq  \frac{2}{(\tr {\omega_0} \omega)^2} \mathrm{Re} \left( g^{\ov l k} (T_0)^p_{kp} \partial_{\ov l} \tr {\omega_0} \omega \right).
\end{align}
The above estimate differs from the one in \cite{TW2} by a factor of $e^{-t}$ which again comes from equation \eqref{theeminust}. By definition of the respective terms, 
\begin{equation}\label{estII}
(II) \leq C \tr {\omega} {\omega_0},
\end{equation}
and
\begin{equation}\label{estIII}
(III) \leq C e^{-t} \tr {\omega} {\omega_0} \leq  C \tr {\omega} {\omega_0}.
\end{equation}
Combining \eqref{estI}, \eqref{estII} and \eqref{estIII} with \eqref{bigevoeqn},
\begin{equation}\label{evologtr1}
\left(\ddt - \Delta \right) \log \tr {\omega_0} \omega \leq \frac{2}{(\tr {\omega_0} \omega)^2} \mathrm{Re} \left( g^{\ov l k} (T_0)^p_{kp} \partial_{\ov l} \tr {\omega_0} \omega \right) + C \tr \omega {\omega_0}.
\end{equation}
Let 
\begin{equation*}
\tphi = \varphi - \psi.
\end{equation*}
Using the trick from Phong-Sturm \cite{PS}, we consider the quantity
\begin{equation}\label{logtrQ}
Q = \log \tr {\omega_0} \omega - A \tphi + \frac{1}{\tphi + C_0}.
\end{equation}
Here $A$ is a large constant to be determined later and $C_0$ is large enough so that $\tphi + C_0 \geq 1$ which exists by Lemma \ref{tphibelow}. This choice is made so that
\begin{equation*}
0 < \frac{1}{\tphi + C_0} \leq 1.
\end{equation*}
Fix a time $T' > T_0$ where $T_0$ is as in Lemma \ref{Sbelow}. Since $Q \to -\infty$ as $x \to E$, $Q$ achieves a maximum at some point $(x_0, t_0) \in (M \setminus E ) \times [0,T'].$ If $0 \leq t_0 \leq T_0$, then $Q$ clearly has a uniform upper bound on $M \times [0, T']$. It remains to show that $Q$ is uniformly bounded above if $t_0 > T_0$. 

We compute the parts of the evolution equation for $Q$ separately.
\begin{align}\label{evolutiontphi}
\left(\ddt - \Delta \right) \tphi & =  \dot{\varphi} - \tr \omega{\left( \ddbar \varphi -  \ddbar\psi \right)} \\
\nonumber & =  \dot\varphi -  \tr \omega{\left( \omega - \omegahatt -  \ddbar \psi \right)} \\
\nonumber & = \dot\varphi - n  + \tr \omega S_t.
\end{align}
Using the previous calculation that showed $\Delta \tphi = n - \tr \omega S_t$,
\begin{align}\label{evofracterm}
\left(\ddt - \Delta \right) \frac{1}{\tphi + C_0} & = - \frac{\dot{\varphi}}{(\tphi + C_0)^2} + \frac{\Delta \tphi}{(\tphi + C_0)^2} - \frac{2|\partial \tphi |_g^2}{(\tphi + C_0)^3} \\
\nonumber &= - \frac{\dot{\varphi}}{(\tphi + C_0)^2} + \frac{n - \tr \omega S_t}{(\tphi + C_0)^2} - \frac{2|\partial \tphi |_g^2}{(\tphi + C_0)^3}.
\end{align}
Combining \eqref{logtrQ}, \eqref{evolutiontphi} and \eqref{evofracterm},
\begin{align}\label{evologtrQ1}
\left(\ddt - \Delta \right) Q = & \left(\ddt - \Delta \right) \log \tr {\omega_0} \omega - \left(A+\frac{1}{(\tphi+C_0)^2}\right) \dot\varphi \\ 
\nonumber & +\left(A+\frac{1}{(\tphi+C_0)^2}\right)\left(n - \tr \omega S_t \right) - \frac{2|\partial \tphi |_g^2}{(\tphi + C_0)^3}.
\end{align}
At the maximum of $Q$, $(x_0,t_0)$, 
\begin{equation}\label{dQ0}
0 = \partial_{\ov l} Q = \frac{1}{\tr {\omega_0} \omega} \partial_{\ov l} \tr {\omega_0} \omega - A \partial_{\ov l} \tphi - \frac{\partial_{\ov l} \tphi}{(\tphi + C_0)^2}.
\end{equation}
Substituting \eqref{dQ0} in to the first term in \eqref{evologtr1} at $(x_0,t_0)$,
\begin{align}\label{badtermbound}
\bigg\vert \frac{2}{(\tr {\omega_0} \omega)^2} \mathrm{Re} & \left( g^{\ov l k} (T_0)^p_{kp} \partial_{\ov l} \tr {\omega_0} \omega \right) \bigg\vert \\
\nonumber & =  \left\vert \frac{2}{(\tr {\omega_0} \omega)^2} \mathrm{Re} \left( g^{\ov l k} (T_0)^p_{kp} \left( A + \frac{1}{(\tphi+C_0)^2}\right)\partial_{\ov l} \tphi \right) \right\vert \\
\nonumber & \leq \frac{ |\partial \tphi|_g^2}{(\tphi+C_0)^3} + CA^2 (\tphi+C_0)^3 \frac{ \tr \omega {\omega_0}}{( \tr {\omega_0} \omega)^2}.
\end{align}
Now we break this in to two cases. If $(\tr {\omega_0} \omega)^2 \leq A^2(\tphi + C_0)^3$ at $(x_0,t_0)$, then
\begin{equation*}
Q \leq \log A + \frac{3}{2} \log (\tphi + C_0) - A \tphi + \frac{1}{\tphi + C_0} \leq C
\end{equation*}
where $C$ is some constant depending on $A$ since $\tphi$ is bounded below and the function $x \mapsto \frac{3}{2}\log (x + C_0) - Ax$ is bounded above. 

If instead $(\tr {\omega_0} \omega)^2 \geq A^2(\tphi + C_0)^3$ at $(x_0,t_0)$, substituting \eqref{badtermbound} into \eqref{evologtrQ1},
\begin{align*}
\left(\ddt - \Delta \right) Q \leq & \ \frac{ |\partial \tphi|_g^2}{(\tphi+C_0)^3} + C \tr \omega {\omega_0} - \left(A+\frac{1}{(\tphi+C_0)^2}\right) \dot\varphi \\ 
\nonumber & +\left(A+\frac{1}{(\tphi+C_0)^2}\right)\left(n - \tr \omega S_t \right) - \frac{2|\partial \tphi |_g^2}{(\tphi + C_0)^3}.\\
\nonumber \leq & \ C \tr \omega {\omega_0} + \left(A+1\right) |\dot\varphi|  +\left(A+1\right)\left(n - \tr \omega S_t \right).
\end{align*}
Using Lemma \ref{Sbelow}, we can choose $A$ large enough so that $(A+1) S_t \geq (C+1)\omega_0$. At $(x_0,t_0)$,
\begin{equation*}
0 \leq \left(\ddt - \Delta \right) Q \leq - \tr \omega {\omega_0} + C \left\vert \log \frac{\Omega}{\omega^n} \right\vert + C.
\end{equation*}
At the maximum of $Q$,
\begin{equation*}
\tr {\omega_0} \omega \leq \frac{1}{(n-1)!} (\tr \omega {\omega_0})^{n-1} \frac{\omega^n}{\omega_0^n} \leq C \frac{\omega^n}{\Omega} \left\vert \log \frac{\Omega}{\omega^n}\right\vert^{n-1} + C \leq C
\end{equation*}
and since $\omega^n \leq C \Omega$ and the function $x \mapsto x | \log x |^{n-1}$ is bounded for small $x>0$ we obtain a uniform upper bound for $Q$. In either of the cases, $Q$ is uniformly bounded above, so using Lemma \ref{phibddabove}
\begin{equation*}
\log \tr {\omega_0} \omega \leq C + C \tphi \leq C - C \psi. 
\end{equation*}
Exponentiating gives \eqref{eqntracebound} and \eqref{unifequiv} follows.
\end{proof}

Using these lower order estimates with the higher order estimates in \cite{G} on compact subsets of $M \setminus E$ we obtain uniform $C^\infty_{loc}(M \setminus E)$ estimates for $\varphi$. 

\section{Convergence and uniqueness}

We now complete the proof of Theorem \ref{maintheorem} by showing that $\omega$ converges to a K\"ahler-Einstein metric on $M \setminus E$ in $C^\infty_{loc}(M \setminus E)$. 
\begin{proof}
The quantity
\begin{equation*}
Q = \varphi + Cte^{-t}
\end{equation*}
where $C$ is the constant in Lemma \ref{phibddabove} is uniformly bounded below on compact subsets of $M \setminus E$ by Lemma \ref{tphibelow}. By Lemma \ref{phibddabove} (ii),
\begin{equation*}
\ddt Q = \dot\varphi - Cte^{-t} \leq 0
\end{equation*}
so $\varphi$ converges pointwise to a function $\varphi_\infty$ at $t \to \infty$ on $M \setminus E$. Using the estimates from the previous section, we have convergence in $C^\infty_{loc} (M\setminus E)$. Since $\varphi$ converges as $t \to \infty$, $\dot\varphi \to 0$ similarly in $C^\infty_{loc} (M\setminus E)$.

The above convergence for $\varphi$ and $\dot\varphi$ implies that
\begin{equation*}
\omega \to \omega_\infty := \omegahatinf + \ddbar \varphi_\infty
\end{equation*}
and $\ddt \omega \to 0$ as $t \to \infty$ in $C^\infty_{loc} (M\setminus E)$. Taking $t \to \infty$ in the normalized Chern-Ricci flow \eqref{ncrf},
\begin{equation*}
\Ric(\omega_\infty) = -\omega_\infty
\end{equation*}
on $M \setminus E$. Since $\Ric(\omega_\infty)$ is closed, $\omega_\infty$ is a K\"ahler-Einstein metric on $M \setminus E$. Moreover, applying weak compactness of currents we can extend $\omega_\infty$ to a closed, positive current on $X$ and then $\omega \to \omega_\infty$ as currents on $X$.
\end{proof}

We now show that $\omega_\infty$ is unique in the sense of Theorem \ref{uniquenessthm}:
\begin{proof}
The proof of this result is similar to the K\"ahler case \cite{Ts, TZ} (see also \cite{SWnotes}), but we provide the proof for the sake of completeness. Let $\omega_\infty$ and $\tilde{\omega}_\infty$ be two closed positive currents satisfying (i) and (ii). Define
\begin{equation*}
\theta = \log \frac{\omega_\infty^n}{\Omega}, \ \ \ \tilde\theta = \log \frac{\tilde{\omega}_\infty^n}{\Omega}.
\end{equation*}
Taking $\ddbar$,
\begin{equation*}
\ddbar \theta = - \Ric(\omega_\infty) - \omegahatinf
\end{equation*}
and so
\begin{equation*}
\omega_\infty = \omegahatinf + \ddbar \theta.
\end{equation*}
Similarly
\begin{equation*}
\tilde\omega_\infty = \omegahatinf + \ddbar \tilde\theta.
\end{equation*}
Define the quantity
\begin{equation*}
Q = \theta - (1-\delta) \tilde\theta - \delta \varepsilon \psi
\end{equation*}
for some $0 < \delta < 1$. $Q$ is bounded below and $Q \to \infty$ as $x \to E$ so $Q$ attains a minimum at a point $x_0$ in $M \setminus E$. At $x_0$,
\begin{align*}
\theta - \tilde\theta &= \log \frac{\omega_\infty^n}{\tilde\omega_\infty^n} \\
&= \log \frac{\left(\omegahatinf + (1-\delta) \ddbar \tilde\theta + \delta\varepsilon \ddbar \psi + \ddbar Q \right)^n}{\tilde\omega_\infty^n} \\
&= \log \frac{\left( (1-\delta)\left(\omegahatinf + \ddbar \tilde\theta \right) + \delta \left( \omegahatinf + \varepsilon \ddbar \psi \right) + \ddbar Q  \right)^n}{\tilde\omega_\infty^n} \\
& \geq \log \frac{ (1-\delta)^n \tilde\omega_\infty^n}{ \tilde\omega_\infty^n} = n \log (1-\delta).
\end{align*}
By Lemma \ref{tphibelow}, $\delta \tilde \theta (x_0) - \delta \varepsilon \psi (x_0) \geq \delta C_\varepsilon$. Then
\begin{equation*}
Q(x_0) = \theta(x_0) -\tilde\theta(x_0) + \delta \tilde \theta (x_0) - \delta \varepsilon \psi (x_0) \geq n \log (1-\delta) - \delta C_\varepsilon.
\end{equation*}
Choosing $\delta$ sufficiently small so that $ n \log (1-\delta) > -\varepsilon / 2$ and $ \delta C_\varepsilon < \varepsilon /2$,
\begin{equation*}
Q(x_0) \geq - \varepsilon.
\end{equation*}
Since $Q$ achieves its minimum at $x_0$,
\begin{equation*}
\theta \geq (1-\delta)\tilde\theta + \delta \varepsilon \psi - \varepsilon.
\end{equation*}
Taking $\delta \to 0$ and $\varepsilon \to 0$, 
\begin{equation*}
\theta \geq \tilde \theta.
\end{equation*}
Similarly, $\theta \leq \tilde{\theta}$.
\end{proof}

\section{Acknowledgments}

The author would like to thank Ben Weinkove and Valentino Tosatti for several helpful discussions and suggestions.

\bigskip
\noindent
Department of Mathematics, University of California, Berkeley, 970 Evans Hall \#3840, Berkeley, CA 94720-3840 USA

\end{document}